\documentclass[a4paper,french, english]{article}

\usepackage[utf8]{inputenc}
\usepackage[T1]{fontenc}
\usepackage{babel}
\usepackage[babel]{csquotes}

\usepackage{amsmath}
\usepackage{amsfonts}
\usepackage{amsthm}
\usepackage{bbm}
\usepackage{amssymb}
\usepackage[top=3cm, bottom=3cm, left=3.3 cm, right=3.3 cm]{geometry}
\usepackage{mathrsfs}
\usepackage{hyperref}
\usepackage{stmaryrd}

\usepackage{float}
\usepackage{tikz}
\usetikzlibrary{shapes.misc}
\tikzset{cross/.style={cross out, draw, 
         minimum size=2*(#1-\pgflinewidth), 
         inner sep=0pt, outer sep=0pt}}
\usepackage{IEEEtrantools}
\usepackage{url}

\newcommand{\comm}[1]{{\color{red}#1}}

\newtheorem{theorem}{Theorem}

\newtheorem{lemma}[theorem]{Lemma}
\newtheorem{corollary}[theorem]{Corollary}

\newtheorem*{theorem*}{Theorem}
\theoremstyle{definition}
\newtheorem*{definition*}{Definition}

\DeclareMathOperator{\Ad}{Ad}

\newcommand{\Z}{\mathbb{Z}}
\newcommand{\N}{\mathbb{N}}

\newcommand{\PP}{\mathbb{P}}

\newcommand{\supp}{\text{supp}\,}

\newcommand{\abs}[1]{\lvert#1\rvert}    
\newcommand{\norm}[1]{\lVert#1\rVert}   

\title{Equidistribution of mass for random processes on finite-volume spaces}
\author{Timothée Bénard}
\date{}
\begin{document}

\maketitle

\bigskip

\begin{abstract}
Let $G$ be a real Lie group, $\Lambda\subseteq G$ a lattice,  and $X=G/\Lambda$. We fix a probability measure $\mu$ on $G$ and consider the left random walk induced on $X$. It is assumed that $\mu$ is aperiodic, has a finite first moment, spans a  semisimple algebraic group without compact factors, and has two non mutually singular convolution powers. We show that for every starting point $x\in X$, the $n$-th step distribution $\mu^n*\delta_{x}$ of the walk weak-$\ast$ converges toward some homogeneous probability measure on $X$. 
\end{abstract}
\large

\bigskip
\section{Introduction}
In this note we consider a semigroup $\Gamma$ acting on a space $X$ and study the way the $\Gamma$-orbits distribute in $X$. A first approach might be to describe the orbit closures. In the case where $\Gamma$ is a diagonal group acting on a finite-volume homogeneous space, it is known that these orbit closures may be extremely irregular, with transverse sections homeomorphic to a Cantor set. However, Ratner's theorems \cite{Rat} highlight that on the opposite, the orbits of a group generated by Ad-unipotent 1-parameter flows are very regular. Benoist-Quint \cite{BQIII} recently proved that such regularity also occurs for the action of a semigroup $\Gamma$ whose Zariski closure is semisimple. Our goal is to complement Benoist-Quint's result by estimating how $\Gamma$-orbits equidistribute in their closure from the perspective of random walks supported on $\Gamma$. We first recall the state of the art concerning Benoist-Quint's theorems.

Let $G$ be a real Lie group, $\Lambda\subseteq G$ a lattice, and  set $X=G/\Lambda$. We consider the action by left multiplication on $X$ of a closed subsemigroup $\Gamma\subseteq G$. Thoughout the text, $\Gamma$ will always be assumed to satisfy  

\begin{itemize}
\item[$(\mathbf{H})$] The  real algebraic group  $\overline{\Ad\Gamma}^Z\subseteq \text{Aut}(\mathfrak{\mathfrak{g}})$ generated by $\Gamma$ in the adjoint representation  is semisimple, Zariski connected, with no compact factor. 
\end{itemize}

We also need a precise definition to express the regularity of orbit closures.  

\begin{definition*}
 A closed subset $Y$ of $\Omega$ is \emph{homogeneous} if its stabilizer $G_{Y}=\{g\in G, \,gY=Y\}$ acts transitively on $Y$. We add that $Y$ has \emph{finite volume} if the action of $G_{Y}$ on $Y$ preserves a Borel probability measure on $Y$. Such a measure is then unique, denoted by $\nu_{Y}$. If the semigroup $\Gamma$ is included in $G_{Y}$ (and acts ergodically on $(Y,\nu_{Y})$), we say that  $Y$ is \emph{$\Gamma$-invariant} (and $\Gamma$-\emph{ergodic}). 

\end{definition*}

\let\thefootnote\relax\footnotetext{The  author has received funding from the European Research
Council (ERC) under the European Union’s Horizon 2020 research and
innovation programme (grant agreement No. 803711).}

\newpage

The  following result  is essentially due to Benoist-Quint.

\begin{theorem*}[\cite{BQIII, BenDeS}]

For every $x\in X$, the orbit closure $\overline{\Gamma.x}$ is  a  finite-volume homogeneous closed subset of $X$ which is $\Gamma$-invariant ergodic. 
\end{theorem*}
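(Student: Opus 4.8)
The plan is to reduce the statement to the classification of \emph{stationary} measures, which constitutes the analytic heart of Benoist--Quint's work, and then to upgrade that measure-theoretic conclusion to a topological one. To set up a random walk adapted to $\Gamma$, I first choose an auxiliary Borel probability measure $\mu$ on $G$ whose closed generated subsemigroup equals $\Gamma$ and which has a finite first moment (for instance $\mu$ compactly supported with $\supp\mu$ generating $\Gamma$); hypothesis $(\mathbf H)$ then transfers to $\mu$. Recall that a probability measure $\nu$ on $X$ is $\mu$-\emph{stationary} if $\mu*\nu=\nu$.

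The first and deepest ingredient, which I would invoke without reproof, is the measure classification of Benoist--Quint \cite{BQIII}: under $(\mathbf H)$, every ergodic $\mu$-stationary probability measure $\nu$ on $X$ is \emph{homogeneous}, meaning $\nu=\nu_Y$ for a finite-volume homogeneous $\Gamma$-invariant $\Gamma$-ergodic closed subset $Y=\supp\nu$, and moreover $\overline{\Gamma.y}=Y$ for $\nu$-almost every $y$.

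The second ingredient is \emph{non-escape of mass}. Using the finite first moment one establishes a uniform Lyapunov/drift estimate in the spirit of Eskin--Margulis: there is a proper function $u:X\to[1,\infty)$ and constants $a<1$, $b>0$ with $\int_G u(g\cdot z)\,\dd\mu(g)\le a\,u(z)+b$ for every $z\in X$, whence $\sup_n\int_G u(g\cdot x)\,\dd\mu^n(g)<\infty$ for every $x$. Consequently the Cesàro empirical measures $\nu_{n,x}=\frac1n\sum_{k=0}^{n-1}\mu^k*\delta_x$ form a tight family, and every weak-$*$ limit $\nu$ is a $\mu$-stationary \emph{probability} measure supported on $\overline{\Gamma.x}$. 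Decomposing $\nu$ into ergodic components and applying the classification, $\overline{\Gamma.x}$ contains at least one finite-volume homogeneous $\Gamma$-ergodic subset $Y$ with $Y=\supp\nu_Y\subseteq\overline{\Gamma.x}$.

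The main obstacle is precisely the passage from measures to orbit closures: a priori $\overline{\Gamma.x}$ could strictly contain the support of every stationary measure it carries. To close this gap I would argue by induction on $\dim\overline{\Gamma.x}$, using that finite-volume homogeneous subsets of bounded complexity do not accumulate. Fixing a maximal-dimensional $Y$ produced above, if $Y\subsetneq\overline{\Gamma.x}$ I would select a point $x'\in\overline{\Gamma.x}\setminus Y$ close to $Y$, analyze the return of its trajectory to a neighborhood of $Y$, and use the homogeneity of $Y$ together with the recurrence estimate to manufacture a stationary measure charging a strictly larger homogeneous subset $Y'$ with $Y\subsetneq Y'\subseteq\overline{\Gamma.x}$, contradicting maximality. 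The delicate point, requiring Benoist--Quint's finer equidistribution and drift arguments together with the Bénard--de Saxcé \cite{BenDeS} refinements that handle the non-compact, finite-volume setting, is controlling this return mechanism uniformly so that no mass leaks into the cusp; this is where the finite first moment and the absence of compact factors are genuinely used.
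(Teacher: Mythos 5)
You have correctly identified the two pillars of the Benoist--Quint strategy (the classification of ergodic $\mu$-stationary measures as homogeneous measures, and non-escape of mass via Eskin--Margulis drift functions giving tightness of the Cesàro averages $\frac1n\sum_{k<n}\mu^k*\delta_x$), and you have also correctly located the crux: a weak-$\ast$ limit $\nu$ of these averages is a stationary probability measure on $\overline{\Gamma.x}$, but a priori nothing forces its support to contain $x$, hence to equal $\overline{\Gamma.x}$. Note that the paper does not prove this statement --- it is quoted from \cite{BQIII, BenDeS} --- but the mechanism that closes this gap in those references is visible in the paper's proof of Theorem \ref{TH}, and it is not the one you propose.

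The actual bridge consists of two specific inputs that your sketch replaces with a vague maximality/return argument. First, a countability statement: the collection of $\Gamma$-invariant ergodic finite-volume homogeneous subspaces $Y\subseteq X$, taken up to the action of the centralizer $L=Z_G(\Gamma)$, is countable (\cite[Lemma 6.1]{BenDeS}); hence $\nu$ decomposes as a countable sum of pieces carried by sets of the form $LY_i$. Second, a non-accumulation (or ``linear instability'') theorem (\cite[Theorem B']{BenDeS}, already present in \cite{BQIII} for bounded jumps): if $x\notin LY$ for a proper such $Y$, then the Cesàro averages started at $x$ give asymptotically zero mass to $LY$. Since $\nu$ is a probability measure, some component with $x\in LY$ must survive, and taking $Y$ of minimal dimension, $\Gamma$-invariance and closedness of $Y$ force $\overline{\Gamma.x}=Y$ with $\nu=\nu_Y$. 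Your proposed alternative --- pick a maximal $Y$, take $x'\in\overline{\Gamma.x}\setminus Y$ near $Y$, and ``manufacture a stationary measure charging a strictly larger homogeneous subset'' --- does not obviously close: there is no mechanism supplied for producing such a larger homogeneous stationary measure from a single nearby point (doing so would essentially presuppose the equidistribution theorem you are trying to prove), and maximality of $\dim Y$ is the wrong induction variable, since the issue is not the existence of large homogeneous pieces inside $\overline{\Gamma.x}$ but whether the limit measure can concentrate on pieces avoiding $x$. As written, the proof has a genuine gap at exactly the step you flagged as delicate.
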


The proof also yields that random walks on $\Gamma$ equidistribute in the $\Gamma$-orbits in Cesàro average.  This is due to Benoist-Quint for walks with bounded jumps, and to Bénard-De Saxcé for walks with  finite first moment. 

\begin{theorem*}[\cite{BQIII, BenDeS}]
Let $\mu$ be a Borel probability measure on $\Gamma$ with a finite first moment and whose support spans a dense subsemigroup of $\Gamma$. 

Then for every $x\in X$, we have the weak-$\ast$ convergence
\begin{align}\label{equid}
\frac{1}{n}\sum_{k=0}^{n-1}\mu^k\ast \delta_{x} \,\underset{n\to +\infty}{\longrightarrow}\, \nu_{x}
\end{align}
where $\nu_{x}:=\nu_{\overline{\Gamma.x}}$ denotes the homogeneous probability measure on  $\overline{\Gamma.x}$. 
\end{theorem*}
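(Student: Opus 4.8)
The plan is to study the Cesàro averages $\nu_n:=\frac1n\sum_{k=0}^{n-1}\mu^k\ast\delta_x$ directly. Introduce the Markov operator $P$ acting on bounded continuous functions by $Pf(y)=\int_G f(gy)\,\dd\mu(g)$, whose adjoint on measures is $\lambda\mapsto\mu\ast\lambda$; each $\mu^k\ast\delta_x$ is then a probability measure carried by $\Gamma^k.x\subseteq\overline{\Gamma.x}$, so every $\nu_n$ is a probability measure supported in the closed set $\overline{\Gamma.x}$. Since $X$ carries finite volume but is typically noncompact, the overall scheme is: first, show the family $(\nu_n)_n$ is tight, so that its weak-$\ast$ limit points are genuine probability measures; second, show every such limit point is $\mu$-stationary; and third, use Benoist--Quint's classification of stationary measures together with the orbit closure theorem recalled above to identify the only possible limit as $\nu_x$, which forces convergence of the entire sequence.

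The main difficulty is the first step, the absence of escape of mass, and this is exactly where the finite first moment and hypothesis $(\mathbf H)$ enter. The plan is to construct a proper function $u\colon X\to[1,+\infty)$ (a height, or Margulis, function; proper meaning that $\{u\le R\}$ is compact for every $R$) together with constants $a\in(0,1)$ and $b\ge0$ such that the drift inequality $Pu\le a\,u+b$ holds. Such a function measures distance to the cusps, and its construction near each cusp relies on the semisimplicity and absence of compact factors in $(\mathbf H)$ to produce a uniform contraction of the walk away from infinity, while the finite first moment is what guarantees $\int\log\norm{\Ad g}\,\dd\mu(g)<+\infty$ and thereby controls the one-step displacement. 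Iterating the drift inequality yields $P^k u(x)\le a^k u(x)+\frac{b}{1-a}$, so that $\nu_n(u)\le\frac{u(x)}{n(1-a)}+\frac{b}{1-a}$ is bounded uniformly in $n$; by the Markov inequality and properness of $u$ this gives $\sup_n\nu_n\bigl(X\setminus\{u\le R\}\bigr)\to0$ as $R\to+\infty$, that is, tightness. I expect this recurrence estimate, and in particular upgrading it from bounded jumps to a mere finite first moment, to be the main obstacle of the whole argument.

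Granting tightness, let $\nu$ be any weak-$\ast$ limit of a subsequence $\nu_{n_j}$; it is a probability measure supported in $\overline{\Gamma.x}$. It is moreover $\mu$-stationary, since $\mu\ast\nu_n-\nu_n=\frac1n(\mu^n\ast\delta_x-\delta_x)\to0$ weak-$\ast$, whence $\mu\ast\nu=\nu$ in the limit. I then invoke Benoist--Quint's stationary measure classification under $(\mathbf H)$: every ergodic $\mu$-stationary probability measure on $X$ is homogeneous, and because $(\mathbf H)$ forbids compact factors it is in fact $\Gamma$-invariant. The recurrence furnished by $u$ shows that the support of $\nu$ is the whole of $\overline{\Gamma.x}$, which by the orbit closure theorem is a finite-volume homogeneous $\Gamma$-ergodic set; by the uniqueness clause in the definition of such sets, $\nu_x$ is the unique $\Gamma$-invariant probability measure it carries with full support, so $\nu=\nu_x$. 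Since every subsequential limit of the tight family $(\nu_n)_n$ equals $\nu_x$, the whole sequence converges weak-$\ast$ to $\nu_x$, which is precisely \eqref{equid}.
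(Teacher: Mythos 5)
Your overall architecture --- tightness via a Margulis drift function, stationarity of the Cesàro limit points, Benoist--Quint's classification of stationary measures, then identification of the limit --- is exactly the scheme of \cite{BQIII, BenDeS}, which the paper cites for this statement rather than reproving it. Your first two steps are sound as plans: the computation $\mu\ast\nu_n-\nu_n=\frac1n(\mu^n\ast\delta_x-\delta_x)\to 0$ is correct, and you rightly flag that constructing $u$ with $Pu\le au+b$ under a mere first-moment hypothesis is the hard analytic input (this is the content of \cite{BenDeS}, extending Eskin--Margulis and \cite{BQIII}).

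The genuine gap is in your final paragraph, at the sentence claiming that ``the recurrence furnished by $u$ shows that the support of $\nu$ is the whole of $\overline{\Gamma.x}$.'' The drift function only prevents escape of mass to the cusps; it says nothing about mass concentrating on a \emph{proper} closed $\Gamma$-invariant subset of $\overline{\Gamma.x}$, for instance a smaller finite-volume homogeneous orbit sitting inside it. What stationarity actually gives is that $\supp\nu$ is a closed $\Gamma$-invariant subset of $\overline{\Gamma.x}$, and the classification then says the ergodic components of $\nu$ are homogeneous measures $\nu_Y$ for various invariant homogeneous $Y\subseteq\overline{\Gamma.x}$ --- possibly proper ones, in which case $\nu\neq\nu_x$. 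Ruling this out is a separate and substantial ingredient: one needs that the proper $\Gamma$-invariant ergodic finite-volume homogeneous subspaces of $\overline{\Gamma.x}$ are contained in countably many sets $LY_i$ with $L=Z_G(\Gamma)$ and $x\notin LY_i$ (\cite[Lemma 6.1]{BenDeS}), together with a non-accumulation estimate $\nu(LY_i)=0$, proved with a second family of Margulis-type functions attached to the $Y_i$ via linear representations of $G$ (\cite[Theorem B']{BenDeS}, after \cite{BQIII}). This is precisely the step the paper makes explicit when it runs the analogous argument for $\mu^{nd}\ast\delta_x$ in the proof of Theorem \ref{TH}. Without it your identification step does not close, so the proposal is incomplete at its conclusion rather than wrong in approach.
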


In this statement, $\mu^k$ denotes the $k$-fold convolution of $\mu$ and the assumption of finite first moment on $\mu$ means that  $$\int_{G}\log \,\norm{\text{Ad} g}\,d\mu(g)<\infty$$
Moreover, the weak-$\ast$ topology refers to the pointwise convergence topology for the space of finite Borel measures on $X$ seen as linear forms on the space of continuous bounded functions on $X$. 

Benoist-Quint  ask in the $10^{\text{th}}$ Takagi Lectures \cite[p. 29]{BQTakagi} whether the  equidistribution result (\ref{equid}) still holds without Cesàro averages. Our goal is to answer positively under the condition that $\mu$  has two distinct powers which are not mutually singular. Note also that an aperiodicity condition is required  to avoid situations where $\mu^{*n}*\delta_{x}$ is obviously non converging; for example, the case where there exists a group $\Lambda'$ containing $\Lambda$ as a finite-index normal subgroup and such that $\mu$ is supported on a non trivial class in $\Lambda'/\Lambda$ \cite{Proh19}.  We will say that $\mu$ is \emph{aperiodic} if its support is not contained in  a fibre of a non trivial continuous morphism of semigroups from $\Gamma$ to a finite (cyclic) group.  

\begin{theorem}[Equidistribution of mass] \label{TH}
Let $G$ be a real Lie group, $\Lambda\subseteq G$ a lattice, and $\Gamma\subseteq G$ a closed subsemigroup satisfying $(\mathbf{H})$. Let $\mu$ be an aperiodic Borel probability measure on $\Gamma$ with a finite first moment and whose support spans a dense subsemigroup of $\Gamma$. Assume there exist $k\neq l\in \N$ such that $\mu^{k}$ is not singular with $\mu^{l}$. 

Then for every $x\in X$, we have the weak-$\ast$ convergence
$$\mu^n\ast \delta_{x} \underset{n\to +\infty}{\longrightarrow}  \nu_{x}$$
where $\nu_{x}:=\nu_{\overline{\Gamma.x}}$ denotes the homogeneous probability measure on  $\overline{\Gamma.x}$. 
\end{theorem}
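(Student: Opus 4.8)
The plan is to upgrade the Cesàro convergence \eqref{equid} to genuine convergence by ruling out the only possible obstruction, namely an asymptotic rotation. First I would reduce to $X=Y:=\overline{\Gamma.x}$, which by the quoted theorems is a finite-volume homogeneous $\Gamma$-ergodic space carrying the unique $\mu$-stationary probability measure $\nu_x$. On the non-compact $Y$ one must rule out escape of mass: using the finite first moment together with the recurrence/drift estimates available in this semisimple setting, I would establish uniform tightness $\sup_n(\mu^n\ast\delta_x)(Y\setminus K)\to0$ as $K\uparrow Y$, so that the set $L$ of weak-$\ast$ limit points of $(\mu^n\ast\delta_x)_n$ is a nonempty, weak-$\ast$-compact set of probability measures on $Y$, forward-invariant under the Markov operator $P\colon\nu\mapsto\mu\ast\nu$. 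The theorem becomes equivalent to $L=\{\nu_x\}$.

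To analyze $L$ I would pass to $P$ acting on $L^2(Y,\nu_x)$. By the Jacobs--de Leeuw--Glicksberg decomposition the peripheral part of this contraction is spanned by unimodular eigenfunctions and carries $P$ as a compact-abelian-group rotation, while on its orthocomplement $P^n\to0$ weakly; since the Cesàro averages already converge to $\nu_x$, the sole obstruction to convergence is a nonconstant measurable $\phi$ with $|\phi|=1$ and $P\phi=\zeta\phi$ for some $\zeta$ on the unit circle, $\zeta\neq1$. This is exactly the obstruction realized by a deterministic rotation, and here the hypotheses enter. Since $|\phi|$ is $P$-invariant hence constant, equality in the triangle inequality forces $\phi(gy)=\zeta\phi(y)$ for $\mu$-a.e.\ $g$ and $\nu_x$-a.e.\ $y$; iterating (using stationarity) gives $\phi(gy)=\zeta^{k}\phi(y)$ for $\mu^{k}$-a.e.\ $g$ and $\phi(gy)=\zeta^{l}\phi(y)$ for $\mu^{l}$-a.e.\ $g$ (say $k>l$). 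Testing both identities on a common nonzero component of $\mu^{k}$ and $\mu^{l}$, which exists precisely because $\mu^{k}$ is not singular with $\mu^{l}$, yields $\zeta^{k}=\zeta^{l}$, so $\zeta$ is a root of unity of order $q$ dividing $k-l$. This is the step eliminating the irrational/continuous rotations, the very ones occurring for a deterministic rotation, where $\mu^k$ and $\mu^l$ would be mutually singular.

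It remains to eliminate a surviving root of unity $\zeta$ of order $q>1$, and this is where aperiodicity is used: from $\phi(gy)=\zeta\phi(y)$ we get $\phi^{q}(gy)=\phi^{q}(y)$, so $\phi^{q}$ is $\Gamma$-invariant and hence $\nu_x$-a.e.\ constant by ergodicity; thus $\phi$ descends to a $\Gamma$-equivariant measurable map $Y\to\Z/q\Z$ along which $\mu$-a.e.\ $g$ acts by translation by $1$. Invoking the rigidity of finite factors in the homogeneous setting, this factor is realized by a continuous nontrivial morphism $\Gamma\to\Z/q\Z$ carrying $\supp\mu$ into a single fibre, contradicting aperiodicity; hence $q=1$ and $P$ has no peripheral spectrum besides the constants. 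It then remains to deduce $L=\{\nu_x\}$.

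I expect the main obstacle to be precisely this last deduction: turning the operator-theoretic statement into the pointwise conclusion $\mu^n\ast\delta_x\to\nu_x$ for \emph{every} starting point, including purely singular ones. When $\mu$ is finitely supported, every $\mu^n\ast\delta_x$ is atomic and no $L^2$-smoothing or spectral gap is available, so the weak-$L^2$ convergence of $P^nf$ does not transfer to the deterministic start $\delta_x$ by soft means. Handling this requires working directly at the level of $L$ and exploiting that, by the previous two steps, the peripheral eigenfunctions are forced to be continuous functions arising from genuine continuous factors; their absence must then be leveraged, together with the homogeneous structure and the tightness from the first paragraph, to collapse $L$ to the single fixed point $\nu_x$. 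Establishing this collapse for arbitrary $x$ is the technical heart of the argument, while the clean computation forcing $\zeta^{k}=\zeta^{l}$ is what makes the hypothesis of two non-singular powers do its essential work.
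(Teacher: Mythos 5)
There is a genuine gap, and it is the one you flag yourself at the end: nothing in your argument bridges the $L^2(Y,\nu_x)$ spectral statement to the convergence of $\mu^n\ast\delta_x$ for a \emph{fixed} point $x$. The Dirac mass $\delta_x$ is singular with respect to $\nu_x$, so the absence of peripheral eigenfunctions of $P$ on $L^2(\nu_x)$ (even granting the Jacobs--de Leeuw--Glicksberg splitting, whose stable part in general only satisfies a density-one weak convergence to $0$, not $P^n\to 0$ weakly) gives no information about the orbit of $\delta_x$ under $\nu\mapsto\mu\ast\nu$. Your identification of the two mechanisms --- non-singular powers forcing $\zeta^k=\zeta^l$, aperiodicity killing the residual roots of unity --- is correct and does correspond to how the hypotheses are used, but as written the proof does not close.

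The paper's route avoids this transfer problem entirely by working at the level of the measures $\mu^n$ on the group itself, before any starting point is chosen. The key input is Foguel's lemma: if $\mu^k$ and $\mu^l$ are not mutually singular then $\norm{\mu^n-\mu^{n+k-l}}\to 0$ in total variation. Since $\norm{\mu^{nd}\ast\delta_x-\mu^{(n+1)d}\ast\delta_x}\leq\norm{\mu^{nd}-\mu^{(n+1)d}}$, every weak-$\ast$ limit of the subsequence $(\mu^{nd}\ast\delta_x)_n$ is automatically $\mu^d$-stationary, for \emph{every} $x$ --- this is Corollary \ref{stat}, and it is exactly the step your spectral argument cannot reproduce. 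From there the paper invokes the Bénard--De Saxcé package (no escape of mass, Cesàro equidistribution from every point of the orbit closure, non-accumulation on the countably many proper invariant homogeneous subspaces) to identify the limit along multiples of $d$ as $\nu_{d,x}$, and only then uses aperiodicity, in a purely combinatorial way, to show that the $d$ measures $\mu^i\ast\nu_{d,x}$ coincide. If you want to salvage your approach, the statement you need to prove is precisely Foguel's total-variation lemma (or some substitute valid at the level of point masses); the $L^2(\nu_x)$ eigenfunction analysis, however suggestive, cannot supply it.
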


In particular, we obtain equidistribution of mass  when $\mu$ is \emph{symmetric with countable support}, or if $\mu$ is aperiodic with $\mu^k(e)>0$ for some $k\geq 1$.

\section{Proof of Theorem \ref{TH}}

Given a signed measure $m$ on a measurable space, we denote by $\norm{m}$ its total variation
$$\norm{m}=\sup \sum_{i\in \N} |m(A_{i})|  $$ 
 where the supremum is taken over all the measurable countable partitions $(A_{i})_{i\in \N}$ of the space.

\begin{lemma}\label{dif}
Let $H$ be a measurable group and $\mu$ a  probability measure on $H$ such that $\mu^k, \mu^l$ are not mutually-singular for some distinct integers $ k\neq l\in \N$. Then we have the convergence
$$\norm{\mu^n-\mu^{n+k-l} }\underset{n\to +\infty}{\longrightarrow} 0 $$
\end{lemma}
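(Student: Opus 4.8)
The plan is to reduce everything to the monotone behaviour of the single numerical sequence $a_n:=\norm{\mu^n-\mu^{n+d}}$ and to prove that its limit is $0$. Assume without loss of generality that $k>l$ and set $d:=k-l\geq 1$ (the case $k<l$ is symmetric after reindexing $n$). First I would record that convolution by a probability measure is a total-variation contraction: for any signed measure $m$ one has $\norm{m*\mu}\leq\norm{m}\,\norm{\mu}=\norm{m}$. Applying this to $m=\mu^n-\mu^{n+d}$ gives $a_{n+1}=\norm{(\mu^n-\mu^{n+d})*\mu}\leq a_n$, so $(a_n)$ decreases to a limit $a\in[0,2]$, and the whole problem becomes to show $a=0$.

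Next I would extract a quantitative overlap from the hypothesis. Let $\rho:=\mu^k\wedge\mu^l$ be the largest measure dominated by both; non-mutual-singularity means exactly $c:=\norm{\rho}>0$. Writing $\mu^k=\rho+\sigma_k$ and $\mu^l=\rho+\sigma_l$ and convolving by $\mu^m$, the measures $\mu^{l+m}$ and $\mu^{(l+m)+d}=\mu^{k+m}$ both dominate the common part $\rho*\mu^m$, of mass $c$. Since $\norm{\nu_1-\nu_2}=2\bigl(1-\norm{\nu_1\wedge\nu_2}\bigr)$ for probability measures, this yields $a_{l+m}\leq 2(1-c)$ for every $m$, hence $a\leq 2(1-c)$. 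If $c=1$ then $\mu^k=\mu^l$, so $\mu^l*\mu^d=\mu^l$ and $a_n=0$ for all $n\geq l$; thus I may assume $c<1$, which gives the strict bound $a<2$.

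The remaining step, upgrading $a<2$ to $a=0$, is the heart of the matter and is an instance of the Ornstein--Sucheston $0$--$2$ law. To set it up I would use the Hahn decomposition $\mu^n=\lambda_n+b_n^{+}$, $\mu^{n+d}=\lambda_n+b_n^{-}$, where $\lambda_n=\mu^n\wedge\mu^{n+d}$ and $b_n^{+}\perp b_n^{-}$ are the residuals, of common mass $q_n:=a_n/2$. Convolving by $\mu^j$ and using $\norm{p-p'}=2\norm{p}-2\norm{p\wedge p'}$ for positive measures of equal mass, a short computation identifies the loss along the sequence:
$$\norm{(b_n^{+}*\mu^j)\wedge(b_n^{-}*\mu^j)}=q_n-q_{n+j}.$$
Since $q_n\to a/2$, the right-hand side tends to $0$ for every fixed $j$: the convolved residual parts become asymptotically mutually singular under every shift. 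I would then argue that, were $a>0$, this persistent asymptotic singularity of $b_n^{\pm}*\mu^j$ is incompatible with the fixed overlap $c$ carried by $\mu^k$ and $\mu^l$. The natural bridge to try to exploit is the identity $b_n^{-}+\lambda_n=\lambda_n*\mu^d+b_n^{+}*\mu^d$, coming from $\mu^{n+d}=\mu^n*\mu^d$, which ties $b_n^{-}$ to $b_n^{+}$ precisely through the $d$ extra steps where $\rho$ lives.

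The main obstacle is exactly this last dichotomy: turning the informal tension between persistent overlap and asymptotic singularity into a genuine contradiction is not formal, and it is the full content of the convolution $0$--$2$ law. Accordingly my plan is to reduce to that law: choosing a reference measure $\eta=\sum_{n\geq 0}2^{-n-1}\mu^n$, which satisfies $\eta*\mu\ll\eta$, the densities $f_n=\dd\mu^n/\dd\eta$ obey $f_{n+1}=Tf_n$ for the positive $L^1(\eta)$-contraction $T$ induced by convolution, and $a_n=\norm{T^n(f_d-f_0)}_{L^1}$. The $0$--$2$ law then forces $a\in\{0,2\}$, and combined with the strict bound $a<2$ established above this gives $a=0$. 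The genuinely delicate point is this dichotomy; the contraction and overlap estimates serve only to rule out the value $2$, and the final step is to verify that the hypotheses of the $0$--$2$ law apply verbatim in the present measurable-group setting.
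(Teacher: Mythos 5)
Your proposal follows, in substance, the same route as the paper: for the general case the paper proves Lemma~\ref{dif} by citing Foguel's convolution zero--two law, and your argument is precisely a reduction of the lemma to that law. Your preliminary steps are correct and are a useful way to make the citation explicit: the monotonicity of $a_n=\norm{\mu^n-\mu^{n+d}}$ under convolution, the overlap bound $a_{l+m}\leq 2(1-c)$ with $c=\norm{\mu^k\wedge\mu^l}>0$ (so the hypothesis says exactly that the limit is $<2$), and the identity $\norm{(b_n^{+}*\mu^j)\wedge(b_n^{-}*\mu^j)}=q_n-q_{n+j}$ all check out. The one point you should not wave through is the last sentence: the claim that the hypotheses of the $0$--$2$ law ``apply verbatim'' to the contraction $T$ on $L^1(\eta)$ is not a formality. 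The Ornstein--Sucheston theorem in its classical form concerns \emph{conservative ergodic} Markov operators, and the operator induced by convolution on $L^1\bigl(\sum_n 2^{-n-1}\mu^n\bigr)$ need not be either (think of a transient walk); one must also pass from the bound $a_{n_0}<2$ for the single density $f_d-f_0$ to the operator-norm hypothesis of the dichotomy --- here this does work, since $\norm{T^{n+d}-T^n}_{\mathrm{op}}=a_n$ because left convolution by any probability measure is a total-variation contraction, but it needs saying. Removing the conservativity and ergodicity assumptions in the convolution setting is exactly the content of Foguel's theorem, so as written your proof of the key dichotomy amounts to citing (essentially) the lemma itself; since the paper does the same, this is acceptable, but you should cite the convolution version rather than the abstract operator version. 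Finally, note that the paper supplements the citation with a genuinely elementary argument in the special case $l=0$: writing $\mu=\alpha\delta_e+(1-\alpha)\mu_0$, expanding $\mu^n$ binomially, and applying the central limit theorem to a Bernoulli walk on $\N$. Your approach has no analogue of this self-contained special case, which is the only part of the lemma the paper actually proves from scratch.
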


\begin{proof}
This result is due to Foguel \cite[Theorem 1]{Fog75}, whose proof is strongly inspired by the proof of \cite[Theorem 1.1]{OrnSuch70} about conservative ergodic Markov operators. 
The argument is a bit tricky. For completeness, we give a more intuitive explanation in the case where $l=0$, i.e. $\mu^k(e)>0$ for some $k\geq 1$. 

We may assume $k=1$. 
Decompose $\mu$ as $\mu=\alpha \delta_{e}+(1-\alpha)\mu_{0}$ where $\alpha>0$ and $\mu_{0}$ is a probability measure on $H$. Then for $n\geq0$,
$$\mu^n= (\alpha \delta_{e}+(1-\alpha)\mu_{0})^n=\sum_{i=0}^n \binom{n}{i} \alpha^{n-i}(1-\alpha)^i\mu^{i}_{0}  $$
Hence, using the convention $ \binom{n}{i}=0$ if $i\geq n+1$,   
\begin{align*}
\norm{\mu^n-\mu^{n+1}}&= \norm{\sum_{i=0}^{n+1} \left(\binom{n}{i} \alpha^{n-i}(1-\alpha)^i  -\binom{n+1}{i} \alpha^{n+1-i}(1-\alpha)^i \right)\mu^{i}_{0}} \\
&\leq \sum_{i=0}^{n+1} \,\abs{\binom{n}{i} \alpha^{n-i}(1-\alpha)^i  -\binom{n+1}{i} \alpha^{n+1-i}(1-\alpha)^i }\\
&= \sum_{i=0}^{n+1}\, \abs{\PP(S_{n}=i)  -\PP(S_{n+1}=i)  }
\end{align*}
where $S_{n}$ denotes the $n$-th step  of the Markov chain on $\N$ starting at the origin and with i.i.d. increments of law $\alpha\delta_{0} +(1-\alpha) \delta_{1}$. A straightforward computation shows that 
$$\PP(S_{n}=i) \geq \PP(S_{n+1}=i)  \iff i \leq (n+1)(1-\alpha)=:m_{n}$$
Hence, 
\begin{align*}
\norm{\mu^n-\mu^{n+1}}\leq \abs{\PP(S_{n}\leq m_{n}) - \PP(S_{n}> m_{n})}+ \abs{\PP(S_{n+1}\leq m_{n}) - \PP(S_{n+1}> m_{n})}
\end{align*}

By the  Central Limit Theorem, each term in the right hand side converges to zero as $n$ goes to infinity, whence the result.  
\end{proof}

\bigskip
Lemma \ref{dif} has the following remarkable corollary. Recall that given an action of $H$ on a  space $X$, a  measure $\nu$ on $X$ is said to be \emph{$\mu$-stationary} if $\nu=\int_{H}g_{\ast}\nu \,d\mu(g)$. 

\begin{corollary} \label{stat} Let $H$ be a topological group acting on a topological space $X$, and $\mu$ a Borel probability measure on $H$ such that $\mu^k, \mu^l$ are not mutually-singular for some distinct integers $ k>l\geq 0$.

 Then for every $x\in X$, all the weak-$\ast$ limits of the sequence $(\mu^{(k-l)n}\ast \delta_{x})_{n\geq 0}$ are $\mu^{k-l}$-stationary. 
\end{corollary}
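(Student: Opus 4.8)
The plan is to set $d:=k-l\geq 1$ and write $\nu_n:=\mu^{dn}\ast\delta_x$, so that the sequence under study is $(\nu_n)_{n\geq 0}$ and the relation $\mu^d\ast\nu_n=(\mu^d\ast\mu^{dn})\ast\delta_x=\mu^{d(n+1)}\ast\delta_x=\nu_{n+1}$ holds for every $n$. Proving that a weak-$\ast$ limit $\nu$ of $(\nu_n)$ is $\mu^d$-stationary thus amounts to passing to the limit in this identity, i.e. to checking simultaneously that $\nu_{n+1}\to\nu$ and $\mu^d\ast\nu_n\to\mu^d\ast\nu$ along the relevant subsequence. The single non-soft ingredient is Lemma \ref{dif}: applied with the given $k>l$ it yields $\norm{\mu^{m}-\mu^{m+d}}\to0$, and specializing to $m=dn$ gives $\norm{\mu^{dn}-\mu^{d(n+1)}}\to0$.

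Next I would control the increments of $(\nu_n)$ in total variation. Since $\nu_{n+1}-\nu_n$ is the image of the signed measure $\mu^{d(n+1)}-\mu^{dn}$ under the orbit map $g\mapsto g\cdot x$, and total variation does not increase under pushforward, one gets $\norm{\nu_{n+1}-\nu_n}\leq\norm{\mu^{d(n+1)}-\mu^{dn}}\to0$. Now fix a weak-$\ast$ convergent subsequence $\nu_{n_j}\to\nu$. Because total-variation convergence to $0$ implies weak-$\ast$ convergence to $0$, the vanishing of the increments forces $\nu_{n_j+1}\to\nu$ as well.

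It then remains to identify $\lim_j \mu^d\ast\nu_{n_j}$ with $\mu^d\ast\nu$. For $f\in C_b(X)$ write $\int f\,d(\mu^d\ast\nu_{n_j})=\int_H F_j(g)\,d\mu^d(g)$ with $F_j(g):=\int_X f(g\cdot y)\,d\nu_{n_j}(y)$. For each fixed $g$ the function $y\mapsto f(g\cdot y)$ lies in $C_b(X)$ by continuity of the action, so $F_j(g)\to\int_X f(g\cdot y)\,d\nu(y)=:F(g)$ by weak-$\ast$ convergence, and $\abs{F_j(g)}\leq\norm{f}_\infty$ uniformly in $j$ and $g$ since each $\nu_{n_j}$ has mass $1$. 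As $\mu^d$ is a probability measure, dominated convergence gives $\int F_j\,d\mu^d\to\int F\,d\mu^d$, that is $\mu^d\ast\nu_{n_j}\to\mu^d\ast\nu$ weak-$\ast$. Combining with $\mu^d\ast\nu_{n_j}=\nu_{n_j+1}\to\nu$ yields $\mu^d\ast\nu=\nu$, which is the desired $\mu^d$-stationarity.

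The steps are all elementary once Lemma \ref{dif} is in hand; the only point requiring care is that $X$ need not be compact, so a weak-$\ast$ limit $\nu$ may lose mass. This causes no trouble: the dominated-convergence argument uses only the uniform mass bound $\norm{\nu_{n_j}}=1$ and the boundedness of $f$, so $\mu^d$-stationarity of $\nu$ holds whatever its total mass. I expect the genuine difficulty of the whole scheme to sit entirely in Lemma \ref{dif} (equivalently Foguel's theorem), the corollary itself reducing to this clean weak-$\ast$ continuity argument.
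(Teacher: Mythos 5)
Your proof is correct and follows exactly the route the paper intends: the paper states this as an immediate corollary of Lemma \ref{dif} without writing out the details, and the details are precisely your chain $\norm{\nu_{n+1}-\nu_{n}}\leq\norm{\mu^{d(n+1)}-\mu^{dn}}\to 0$ followed by the bounded-convergence argument showing $\mu^{d}\ast\nu_{n_{j}}\to\mu^{d}\ast\nu$. The only (harmless, in the paper's setting) caveat is that you work with subsequences, which is fine whenever the relevant space of measures is metrizable, as it is for $X=G/\Lambda$.
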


\noindent{\bf Remark}. The weak-$\ast$ limits in question are not assumed to be of mass $1$, there could be the null measure for instance.

\bigskip

 \begin{proof}[Proof of Theorem \ref{TH}]
 Set $d=|k-l|\geq 1$ then let $\Gamma_{d}$ be the closed semigroup generated by the support of $\mu^d$ and $\Gamma^d=\{g^d, g\in \Gamma\}$. The observations that $\Gamma_{d}\supseteq \Gamma^d$ and  $(\text{Ad}\Gamma)^d$ is Zariski dense in $\text{Ad}\Gamma$ lead to the equality $\overline{\text{Ad}\Gamma_{d}}^Z =\overline{\text{Ad}\Gamma}^Z$.  In particular, we may apply Benoist-Quint's theorems presented above to $\mu^d$. Fix $x\in X$ and  denote by $\nu_{d,x}$ the homogeneous probability measure on $\overline{\Gamma_{d}.x}$.   
 
 We first prove the weak-$\ast$ convergence 
  \begin{align}\label{conv-d}
  \mu^{nd}\ast \delta_{x}\,\longrightarrow \,\nu_{d,x}
  \end{align}
Let $\nu$ be a weak-$\ast$ limit of $(\mu^{nd}\ast \delta_{x})_{n\geq 0}$. It is enough to show that $\nu=\nu_{d,x}$. We know from  \cite[Theorem A]{BenDeS} and Corollary \ref{stat} that $\nu$ is a $\mu^d$-stationary probability measure on $\overline{\Gamma_{d}.x}$. Moreover, for every $y\in \overline{\Gamma_{d}.x}$, we have by \cite[Theorem C]{BenDeS} that $\frac{1}{n}\sum_{k=0}^{n-1}\mu^{kd}\ast \delta_{y}$ converges to the homogeneous measure on $\overline{\Gamma_{d}.y}$. Hence, we just need to check that $\nu$ is concentrated on the set of $y\in \overline{\Gamma_{d}.x}$ such that $\overline{\Gamma_{d}.y}=\overline{\Gamma_{d}.x}$. But, by  \cite[Lemma 6.1]{BenDeS}, the complementary subset  is included in some countable union $\cup_{i\in \N}LY_{i}$ where $L=Z_{G}(\Gamma_{d})$ denotes the centralizer of $\Gamma_{d}$ in $G$, and each $Y_{i}\subsetneq \overline{\Gamma_{d}.x}$ is a proper $\Gamma_{d}$-invariant ergodic finite-volume homogeneous subspace. Noticing that $x\notin L Y_{i}$ and applying the result of non accumulation of mass established in \cite[Theorem B']{BenDeS}, we get that $\nu(LY_{i})=0$ for each $i\in \N$, which finishes the proof of (\ref{conv-d}).

    Now combining (\ref{conv-d}) with the fact that $\mu^n\ast\delta_{x}$ converges toward $\nu_{x}$ in Cesàro averages,  we obtain
 $$\nu_{x}=\frac{1}{d}\left(\nu_{d,x}+\mu\ast \nu_{d,x}+\dots+\mu^{d-1}\ast \nu_{d,x}   \right)$$
  To conclude, it is enough to show that $\nu_{d,x}$ is $\Gamma$-invariant.  This is where we use the aperiodicity assumption.  We denote by $Y_{i}$ the support of  $\mu^i\ast \nu_{d,x}$ and check that every $g\in \Gamma$ acts as a permutation of the set $\{Y_{i}, i\in \Z/d\Z\}$. We can assume that $g\in \supp \mu$. The observation that $Y_{k+i}=\overline{(\supp \mu)^k Y_{i}}$ implies in particular that $gY_{i}\subseteq Y_{i+1}$.   Then, $Y_{i}$ being $\Gamma_d$-invariant, we have  $Y_{i}=g^dY_{i}\subseteq g^{d-1}Y_{i+1} \subseteq Y_{i}$, whence $gY_{i}=Y_{i+1}$. Finally, $\Gamma$ does permute the components  $\{Y_{i}, i\in \Z/d\Z\}$, and the  elements of $\supp \mu$ all act as the same permutation. The aperiodicity condition yields that the $Y_{i}$'s are all equal, or in other words that $\Gamma$ stabilizes the support of $\nu_{d,x}$. By homogeneity, this means that $\nu_{d,x}$ is $\Gamma$-invariant and concludes the proof.  

 \end{proof}

 \newpage
\nocite{el_rw}
\bibliographystyle{abbrv}

\bibliography{bibliographie}

\end{document}